\documentclass[a4paper]{amsart}
\usepackage[utf8]{inputenc}
\usepackage[T1]{fontenc}
\usepackage{lmodern}
\usepackage{microtype}
\usepackage{mathrsfs}		
\usepackage{csquotes}		
\usepackage{mathtools}		
\usepackage{enumitem}		
\usepackage[ngerman,american]{babel}	
\usepackage[all]{xy}					
\usepackage[
backend=biber,
giveninits=true,
isbn=false,
url=false
]{biblatex}
\usepackage{hyperref}
\usepackage[
nameinlink,
]{cleveref}
\addbibresource{local_isom_prop.bib}
\frenchspacing
\allowdisplaybreaks
\SelectTips{cm}{10}			
\theoremstyle{plain}
\newtheorem{theo}{Theorem}
\newtheorem{lemm}{Lemma}
%
\newcommand{\ZZ}{\mathbb{Z}}				
\newcommand{\CC}{\mathbb{C}}				


%
\newcommand{\id}[1]{\mathsf{id}_{#1}}		
\newcommand{\pr}[1]{\mathsf{pr}_{#1}}		
\newcommand{\PP}[1]{\mathbb{P}(#1)}			
\newcommand{\CP}[1]{\mathbb{P}^{#1}}		
%
\newcommand{\Tsp}[2]{\mathsf{T}_{#2}#1}		
%
\newcommand{\Hsh}[3]{\mathsf{H}^{#1}(#2;#3)}		
\newcommand{\hdim}[3]{\mathsf{h}^{#1}(#2;#3)}		
\newcommand{\Hdg}[3]{\mathsf{H}^{#1,#2}(#3)}		
%
\newcommand{\hull}{\operatorname{\mathsf{span}}}	
\newcommand{\Aut}{\operatorname{\mathsf{Aut}}}		
\renewcommand{\dim}{\operatorname{\mathsf{dim}}}	
%
\DeclarePairedDelimiter\paren{(}{)}					
\DeclarePairedDelimiter\set{\{}{\}}					
\DeclarePairedDelimiterX\bil[2]{\langle}{\rangle}{#1,#2}
\DeclarePairedDelimiterX\range[2]{\{}{\}}{#1,\dots,#2}	
\DeclarePairedDelimiterX\setb[2]\lbrace\rbrace{#1 \;\delimsize\vert\; #2}	
\renewcommand{\subset}{\subseteq}
\renewcommand{\epsilon}{\varepsilon}
\newcommand{\conj}{\overline}			
\newcommand{\xysquare}[8]{
\xymatrix{
#1 \ar[r]^-{#5} \ar[d]_{#6} & #2 \ar[d]^{#7} \\
#3 \ar[r]_-{#8} & #4 }
}
\newcommand{\rest}[2]{\left.{#1}\right\vert_{#2}}	
\hypersetup{
unicode			= true,
breaklinks		= true,
pdfborderstyle	= {/S/D/D [3 2]/W 1},
pdftitle		= {On the local isomorphism property for families of K3 surfaces},
pdfauthor		= {Tim Kirschner (University of Duisburg-Essen)},
pdfkeywords		= {},
pdfprintscaling	= None
}
\author{Tim Kirschner}
\address{Fakultät für Mathematik\\ Universität Duisburg-Essen}
\email{tim.kirschner@uni-due.de}
\urladdr{\url{http://www.esaga.uni-due.de/tim.kirschner/}}
\thanks{This work has been supported by the SFB/Transregio 45 of the DFG and by the Korea Institute for Advanced Study}
\begin{document}
\title{On the local isomorphism property for families of K3 surfaces}
\date\today
\maketitle

\section{Introduction}
\label{intro}

Let $f \colon X \to S$ and $f' \colon X' \to S$ be two families of compact complex manifolds over a reduced complex space $S$. Following Meersseman \cite[496]{Mee11} we say that these families are \emph{pointwise isomorphic} when for all points $s \in S$ the fibers $X_s = f^{-1}(s)$ and $X'_s = f'^{-1}(s)$ of the families are biholomorphic.
When $U$ is an open complex subspace of $S$, we write $f_U \colon X_U \to U$ for the induced holomorphic map where $X_U \subset X$ denotes the inverse image of $U \subset S$ under $f$; we adopt the analogous notation for $f'$.
Given a point $o \in S$ we say that the family $f'$ is \emph{locally isomorphic} at $o$ to $f$ when there exist an open complex subspace $T \subset S$ and a biholomorphism $g \colon X_T \to X'_T$ such that $o \in T$ and $f_T = f'_T \circ g$.
We say that the family $f$ has \emph{the local isomorphism property} at $o$ when all families that are pointwise isomorphic to $f$ are locally isomorphic at $o$ to $f$.

In 1977, assuming $S$ nonsingular, Wehler states \cite[77]{Weh77} that it is unclear whether $f$ has the local isomorphism property at all points of $S$ if the function
\begin{equation}\label{h0}
s \mapsto \hdim0{X_s}{\Theta_{X_s}} \coloneqq \dim_\CC \Hsh0{X_s}{\Theta_{X_s}} = \dim\Aut(X_s)
\end{equation}
is constant on $S$.
Meersseman \cite[Theorem 3]{Mee11} asserts that Wehler's question becomes a valid criterion. We contend that the opposite is true.

\begin{theo}\label{theorem}
There exist two families of K3 surfaces over a complex manifold $S$ together with a point $o \in S$ such that the families are pointwise isomorphic but not locally isomorphic at $o$.
\end{theo}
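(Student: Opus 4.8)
The plan is to obtain the two families by pulling back a single Kuranishi family of K3 surfaces along two self-maps of the base that differ by a reflection of the K3 lattice. Let $\Lambda$ be the K3 lattice, i.e.\ the even unimodular lattice of signature $(3,19)$, fix a primitive $\delta \in \Lambda$ with $\bil\delta\delta = -2$, and write $\Omega_\Lambda \subset \PP{\Lambda \otimes \CC}$ for the period domain. First I would pick a point $\omega_o \in \Omega_\Lambda$ with $\omega_o \perp \delta$ but otherwise very general; by the surjectivity of the period map there is then a K3 surface $X_o$ together with a marking $H^2(X_o;\ZZ) \cong \Lambda$ under which the period of $X_o$ is $\omega_o$ and $\Pic(X_o) = \ZZ\delta$, and---after replacing $\delta$ by $-\delta$ if necessary---the class $\delta$ is represented by a unique irreducible $(-2)$-curve $C \subset X_o$. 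By the local Torelli theorem the period map identifies a neighbourhood of the base point of the Kuranishi space of $X_o$ with an open neighbourhood $D$ of $\omega_o$ in $\Omega_\Lambda$; shrinking $D$, I may assume that $D$ is stable under the reflection $s_\delta \in O(\Lambda)$, $s_\delta(x) = x + \bil x\delta\,\delta$, which fixes $\omega_o$ because $\delta$ is of type $(1,1)$ there. Since $D$ is contractible the Kuranishi family carries a marking, and I obtain a marked family $\pi\colon \cX \to D$ of K3 surfaces whose fibre over $u$ has period $u$. Finally I put $S \coloneqq D$ and $o \coloneqq \omega_o$, let $f'\colon X' \to S$ be $\pi$, and let $f\colon X \to S$ be the pullback $s_\delta^*\pi$ of $\pi$ along the biholomorphism $s_\delta\colon D \to D$.

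The families are pointwise isomorphic, since the fibre of $f$ over $u$ is $\cX_{s_\delta(u)}$ and $s_\delta$ is an integral Hodge isometry $H^2(\cX_u;\ZZ) \to H^2(\cX_{s_\delta(u)};\ZZ)$, so that the global Torelli theorem for complex K3 surfaces gives $X_u = \cX_{s_\delta(u)} \cong \cX_u = X'_u$ for every $u \in S$. To see that they are not locally isomorphic at $o$, I would argue by contradiction: suppose there are a connected open $T \ni o$ in $S$ and a biholomorphism $g\colon X_T \to X'_T$ with $f_T = f'_T \circ g$. For each $u \in T$, reading the induced Hodge isometry $g_u^*\colon H^2(\cX_u;\ZZ) \to H^2(\cX_{s_\delta(u)};\ZZ)$ through the markings gives an element $\beta_u \in O(\Lambda)$, and as $u$ varies over the connected $T$ this is a locally constant family of isometries, hence a single $\beta \in O(\Lambda)$. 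Being a Hodge isometry, $\beta$ sends the period $u$ of $\cX_u$ to the period $s_\delta(u)$ of $\cX_{s_\delta(u)}$, so $\beta(u) = s_\delta(u)$ in $\Omega_\Lambda$ for all $u \in T$. As $T$ is a non-empty open subset of $\Omega_\Lambda$, which is Zariski dense in a smooth quadric hypersurface and hence spans $\PP{\Lambda \otimes \CC}$, the projective transformations $\beta$ and $s_\delta$ agree on a spanning set; therefore $\beta = s_\delta$ in the projective linear group and $\beta = \pm s_\delta$ in $O(\Lambda)$. Restricting $g$ to the fibre over $o$---where $s_\delta(\omega_o) = \omega_o$, so that $X_o$ and $X'_o$ are both $\cX_{\omega_o}$---yields an automorphism of $X_o$ whose action on $H^2(X_o;\ZZ) = \Lambda$ is $\pm s_\delta$. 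But an automorphism of $X_o$ maps the Kähler cone $\cK_{X_o}$ onto itself, whereas neither $s_\delta$ nor $-s_\delta$ does: for a Kähler class $x$ one has $\bil x\delta > 0$ (because $\delta = [C]$ is effective), hence $\bil{s_\delta(x)}\delta = -\bil x\delta < 0$ and $s_\delta(x) \notin \cK_{X_o}$, while $-s_\delta(x)$ lies in the positive cone of $X_o$ opposite to the one containing $\cK_{X_o}$ and so is not Kähler. This contradiction finishes the proof.

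The main obstacle is this last step. Everything else is standard K3 surface theory: the surjectivity of the period map together with the local Torelli theorem, which furnish the marked Kuranishi family $\pi\colon\cX\to D$ with the normalization ``fibre over $u$ has period $u$'', and the global Torelli theorem in the form ``two K3 surfaces carrying a Hodge isometry between their second cohomology lattices are biholomorphic'', which gives the pointwise isomorphy. The essential point is that a reflection in a $(-2)$-curve class is a Hodge isometry of $X_o$ that does not preserve the Kähler cone---and hence is induced by no automorphism---and that this obstruction, a priori located only at the single fibre $X_o$, is rigid enough (through the locally constant isometry $\beta$) to obstruct an isomorphism of families over every neighbourhood of $o$. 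I note finally that this does bear on Wehler's question: every K3 surface $Y$ satisfies $\hdim0Y{\Theta_Y} = 0$---indeed $\Theta_Y \cong \Omega^1_Y$ and $\hodge10Y = 0$---so the function in \eqref{h0} is constant, identically $0$, on $S$ for both $f$ and $f'$, and yet $f$ does not have the local isomorphism property at $o$.
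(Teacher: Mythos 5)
Your construction is essentially the paper's: composing the Kuranishi family with the reflection-induced automorphism of the base (as the paper does) and pulling the family back along that automorphism (as you do) yield the same pair of families, the pointwise isomorphy via the global Torelli theorem is identical, and your endgame at the fibre over $o$ --- that $\pm s_\delta$ cannot be induced by a biholomorphism because it fails to preserve the K\"ahler cone, $\delta$ being the class of an effective $(-2)$-curve --- is exactly the paper's.

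There is, however, one step whose justification as written is not valid, and it is precisely the step the paper isolates as its Lemma~1. You pass from \enquote{$\beta$ and $s_\delta$ agree on the nonempty open subset $T$ of the quadric} to \enquote{$\beta = s_\delta$ in the projective linear group} on the grounds that $T$ spans $\PP{\Lambda\otimes\CC}$. Agreement of two projective transformations on a spanning set of points does not imply their equality: on $\PP{\CC^2}$ the identity and the class of $\mathrm{diag}(1,2)$ agree on the spanning set $\set{[1:0],[0:1]}$ yet are distinct. What is true --- and what needs proving --- is that a linear automorphism fixing every point of a nonempty open subset $U$ of an irreducible quadric is a scalar. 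The paper's proof uses the openness of $U$ twice: first to extract a basis $v_1,\dots,v_n$ of eigenvectors with $\CC v_k \in U$, and then again to find a further point $\CC v \in U$ with $v = \sum_k \mu_k v_k$ and all $\mu_k \neq 0$; comparing $\alpha(v) = \lambda v$ with $\sum_k \mu_k\lambda_k v_k$ then forces all eigenvalues to coincide. Your argument needs this (or an equivalent) insertion; with it, the rest goes through. A smaller point: you assert without proof that $-s_\delta(x)$ lies in the component of the positive cone opposite to the one containing the K\"ahler cone; this requires knowing that $s_\delta$ itself preserves the two components, which the paper checks by observing that the segment from $w$ to $s_\delta(w) = w + \bil{w}{\delta}\delta$ stays inside the cone.
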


Recall \cite[\S1.3.3]{K3book} that when for all $s \in S$ the fiber $X_s$ of $f$ is a K3 surface, the assignment of \cref{h0} defines the identically zero function.

\subsection*{Acknowledgements}
I would like to thank Martin Schwald for numerous valuable discussions on the subject. Moreover, I would like to thank Jun-Muk Hwang for his hospitality at the Korea Institute for Advanced Study.

\section{Construction of the families}
\label{construction}

We start with a K3 surface $F$ and a $(-2)$-class $d$ on $F$; that is, $d \in \Hdg11F \subset \Hsh2F\CC$ is an integral cohomology class with $\bil dd = -2$. The angle brackets denote the topological intersection form on $\Hsh2F\CC$, which is given by the cup product and the evaluation at the homology class that determines the orientation of $F$. Explicitly $F$ could be the Fermat quartic in $\CP3$ and $d$ the class of a projective line that is contained in $F$.

Observe that the group $\Hsh2F{\Theta_F}$ is trivial \cite[\S6.2.3]{K3book}.
Hence by the theorem of Kodaira, Nirenberg, and Spencer \cite[452]{KNS58} there exist a family of compact complex manifolds $f \colon X\to S$ over a complex manifold $S$ and a point $o \in S$ such that the Kodaira--Spencer map
\[
\rho_o \colon \Tsp So \to \Hsh1{X_o}{\Theta_{X_o}}
\]
is an isomorphism and $F \cong X_o$.
In fact the quoted theorem yields more---namely, that $S$ is, setting $m \coloneqq \hdim1F{\Theta_F}$, an open ball in $\CC^m$ and that topologically $f$ is nothing but the Cartesian projection $\pr1 \colon S\times F \to S$.
In particular for all $s \in S$ the identity map on $F$ induces a vector space isomorphism
\[
\mu_s \colon \Hsh2{X_s}{\CC} \to \Hsh2{X_o}{\CC} \eqqcolon V
\]
which restricts to an isomorphism between the integral cohomology groups and is an isometry with respect to the intersection forms. Notice that the orientation of the fibers varies locally constantly in a family of compact complex manifolds.
Shrinking the ball $S$ we can assume that $X_s$ is a K3 surface for all $s \in S$ \cite[loc.~cit.]{K3book}.

Let $\PP V$ and $Q \subset \PP V$ denote the projective space of $1$-dimensional subspaces of $V$ and the projective quadric defined by the intersection form on $V$, respectively. Then by the local Torelli theorem for K3 surfaces \cite[Proposition 6.2.8]{K3book}, the assignment $s \mapsto \mu_s[\Hdg20{X_s}]$ yields a holomorphic map $p \colon S \to Q$, the \emph{period map}, which is a local biholomorphism at $o$.
We use square brackets to emphasize that we are taking the image of a subspace by a map rather than the image of a point.
Shrinking $S$ we can assume that $p$ is an open embedding. Thus replacing $f$ and $S$ by $p\circ f$ and $p(S)$, respectively, we can assume that $S \subset Q$ is an open subspace on which the period map is the identity:
\begin{equation}\label{pm-id}
\mu_s[\Hdg20{X_s}] = s, \qquad \forall s\in S.
\end{equation}

By virtue of the canonical identification $F \cong \set o\times F = X_o$ we regard $d$ as an element of $V$ and consider the reflection map
\[
\phi \colon V \to V, \qquad \phi(v) = v + \bil vdd.
\]
The map $\phi$ is an involutory linear isometry of $V$ with respect to the intersection form. Hence we obtain an involutory biholomorphism $h \colon Q \to Q$ satisfying $h(s) = \phi[s]$ for all $s \in Q$.
Since
\[
o = \mu_o[\Hdg20{X_o}] = \Hdg20{X_o} \perp \Hdg11{X_o} \ni d,
\]
we see that $o$ is a fixed point of $h$. Thus $h^{-1}(S) \subset Q$ is an open subspace with $o \in h^{-1}(S)$. Replacing $S$ by $S \cap h^{-1}(S)$, we may assume that the map $h$ restricts to a biholomorphism $i \coloneqq \rest hS \colon S \to S$. We define
\[
f' \coloneqq i \circ f \colon X' \coloneqq X \to S.
\]
It is evident that $f'$ is then, just like $f$, a family of K3 surfaces over the complex manifold $S$.

\section{Verification of the isomorphism properties}

In the first place, we show that $f$ and $f'$ are pointwise isomorphic. For that matter let $t \in S$ be an arbitrary point. The definition of $f'$ implies that $X'_t = X_{i^{-1}(t)}$. Setting
\[
\psi \coloneqq \mu_t^{-1} \circ \phi \circ \mu_{i^{-1}(t)} \colon \Hsh2{X_{i^{-1}(t)}}\CC \to \Hsh2{X_t}\CC
\]
and applying \cref{pm-id}, we see that
\[
\psi[\Hdg20{X_{i^{-1}(t)}}] = (\mu_t^{-1} \circ \phi)[i^{-1}(t)] = \mu_t^{-1}[t] = \Hdg20{X_t}.
\]
In addition, $\phi$ and whence $\psi$ is an isometry with respect to the intersection forms and restricts to an isomorphism between the integral cohomology groups. Therefore $X_t$ and $X'_t$ are biholomorphic by virtue of the (weak form of the) global Torelli theorem for K3 surfaces \cite[Theorem 7.5.3]{K3book}.

In the second place, we show that $f'$ is not locally isomorphic at $o$ to $f$.

\begin{lemm}\label{linalg}
Let $\alpha \colon V \to V$ be a linear automorphism and $U \subset Q$ be a nonempty open subset such that $\alpha[t] = t$ for all $t\in U$. Then $\alpha = \lambda\id V$ for a nonzero complex number $\lambda$.
\end{lemm}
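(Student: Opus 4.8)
The plan is to push the hypothesis from the open set $U$ out to all of the quadric $Q$, to translate the result into a statement about isotropic vectors of the intersection form, and then to show that all of these are eigenvectors of $\alpha$ with a common eigenvalue.

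First I would record that, for a fixed nonzero $v \in V$, the condition that $\alpha$ fixes the point $[v] \in \PP V$, i.e. $\alpha(v) \in \CC v$, is cut out by the vanishing of the $2 \times 2$ minors of the matrix with columns $\alpha(v)$ and $v$; hence $Z \coloneqq \setb{[v] \in Q}{\alpha[v] = [v]}$ is Zariski closed in $Q$. Because the intersection form is nondegenerate and $\dim V = 22$, the quadric $Q$ is an irreducible projective variety, so the nonempty open subset $U$ is Zariski dense in it; combined with $U \subset Z$ this forces $Z = Q$. In other words, every nonzero isotropic vector $v \in V$ is an eigenvector of $\alpha$; write $\alpha(v) = \lambda(v)\, v$.

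Next I would fix one nonzero isotropic vector $v_0$, set $\lambda_0 \coloneqq \lambda(v_0)$, and argue that $\lambda(w) = \lambda_0$ for every nonzero isotropic $w$. This is clear when $w \in \CC v_0$. When $w \notin \CC v_0$ and $\bil{v_0}{w} = 0$, the vector $v_0 + w$ is again isotropic and proportional to neither $v_0$ nor $w$, so comparing $\alpha(v_0 + w) = \lambda(v_0 + w)(v_0 + w)$ with $\alpha(v_0) + \alpha(w) = \lambda_0 v_0 + \lambda(w)\,w$ and using linear independence of $v_0, w$ gives $\lambda(w) = \lambda_0$. When $\bil{v_0}{w} \neq 0$, the vectors $v_0$ and $w$ span a hyperbolic plane $H$ whose orthogonal complement $H^\perp$ is nondegenerate of dimension $20$ and hence contains a nonzero isotropic vector $u$; such a $u$ lies in neither $\CC v_0$ nor $\CC w$, so the orthogonal case applies twice and yields $\lambda(w) = \lambda(u) = \lambda_0$. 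Consequently $\alpha$ and $\lambda_0\,\id V$ agree on every isotropic vector, hence on a spanning subset of $V$ --- a hyperbolic decomposition $V = H_1 \perp \dots \perp H_{11}$ exhibits $V$ as spanned by isotropic vectors --- so $\alpha = \lambda_0\,\id V$, with $\lambda_0 \neq 0$ since $\alpha$ is invertible.

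I expect the eigenvalue-propagation step to demand the most care, since the case $\bil{v_0}{w} \neq 0$ is precisely where the size of $\dim V$ is used, via the existence of an isotropic vector in the orthogonal complement of a hyperbolic plane. The passage from the analytically open set $U$ to a Zariski-dense subset is routine once the irreducibility of $Q$ is invoked, and the spanning statement is elementary bilinear algebra over $\CC$.
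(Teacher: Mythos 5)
Your proof is correct, but it resolves the key step by a genuinely different mechanism than the paper. Both arguments open the same way, using the irreducibility of $Q$ (rank of the form $\ge 3$) to exploit the openness of $U$; after that they diverge. The paper never leaves $U$: it shows that the affine cone over $U$ spans $V$, picks a basis $(v_1,\dots,v_n)$ of $V$ with $\CC v_k\in U$ together with one further point $\CC v\in U$ whose coordinates in that basis are all nonzero, and identifies all the eigenvalues at once by comparing coefficients in $\alpha(v)=\lambda v$ --- the classical ``basis plus one vector in general position'' argument. You instead globalize first (every point of $Q$ is fixed, so every nonzero isotropic vector is an eigenvector) and then propagate the eigenvalue through the arithmetic of the quadratic form: two linearly independent orthogonal isotropic vectors have isotropic sum, and the orthogonal complement of a hyperbolic plane still contains nonzero isotropic vectors because $\dim_\CC V=22\ge 4$. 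Each step of yours checks out, including the Zariski-density of $U$ in the irreducible quadric, the closedness of the fixed locus via $2\times 2$ minors, and the spanning of $V$ by isotropic vectors via a hyperbolic decomposition. What each approach buys: yours makes visible exactly how the geometry of the quadric (isotropy and orthogonality) forces the eigenvalues to agree, and is a self-contained piece of bilinear algebra once one knows $Z=Q$; the paper's uses nothing about $Q$ beyond irreducibility and the fact that it is not contained in a hyperplane, so it applies verbatim to any subvariety with those two properties and avoids the case analysis on $\bil{v_0}{w}$.
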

\begin{proof}
We will use that the rank of the intersection form on $V$ is $\ge3$ so that $Q$ is an irreducible quadric in $\PP V$. Indeed, of course, we know that $n \coloneqq \dim_\CC V=22$ and that the intersection form is nondegenerate \cite[\S1.3.3]{K3book}.

Assume that $W\subset V$ is a linear subspace and $U \subset \PP W$. Then, as $Q$ is irreducible and $U\subset Q$ is nonempty and open, the identity theorem for holomorphic functions entails that $Q \subset \PP W$. Since a quadric of rank $\ge2$ is not contained in a linear hyperplane, we conclude that $W=V$. This means that the affine cone over $U$ spans the vector space $V$. 
Hence there exists a basis $(v_1,\dots,v_n)$ of $V$ such that $\CC v_k \in U$ for all $k \in \range1n$. For every $l \in \range1n$ define
\[
W_l \coloneqq \hull\paren*{\setb{v_k}{k\ne l}} \subset V.
\]
Then $U \setminus \PP{W_1}$ is nonempty and open in $Q$. Repeating this argument inductively, we conclude that
$
U \setminus \bigcup_{l=1}^n \PP{W_l}
$
is nonempty and open in $Q$. In particular there exists a vector $v = \mu_1v_1 + \dots + \mu_nv_n$ in $V$ such that $\CC v \in U$ and $\mu_l\ne0$ for all $l$.

Since $\alpha[t]=t$ for all $t \in U$, there exist nonzero complex numbers $\lambda_1,\dots,\lambda_n$ and $\lambda$ such that $\alpha(v_k) = \lambda_kv_k$ for all $k$ and $\alpha(v) = \lambda v$. Therefore
\begin{align*}
(\mu_1\lambda_1)v_1 + \dots + (\mu_n\lambda_n)v_n &= \mu_1\alpha(v_1) + \dots + \mu_n\alpha(v_n) \\
&= \alpha(v) \\
&= \lambda v \\
&= (\lambda\mu_1)v_1 + \dots + (\lambda\mu_n)v_n.
\end{align*}
Comparing coefficients we see that $\mu_k\lambda_k = \lambda\mu_k$, whence $\lambda_k = \lambda$, for all $k$. This proves that $\alpha = \lambda\id V$.
\end{proof}

Now assume $f'$ is locally isomorphic at $o$ to $f$. By \cref{intro} we dispose of an open subspace $T \subset S$ and a biholomorphism $g \colon X_T \to X'_T$ such that $o \in T$ and $f_T = f'_T \circ g$. Fix a point $t \in T$. Then $g$ induces a biholomorphism $g_t \colon X_t \to X'_t$.
Observing that $X'_T = X_{i^{-1}(T)}$, we obtain a commutative diagram of linear maps:
\[
\xysquare{\Hsh2{X_{i^{-1}(T)}}\CC}{\Hsh2{X_T}\CC}{\Hsh2{X_{i^{-1}(t)}}\CC}{\Hsh2{X_t}\CC}
{g^*}{\mathsf{rest.}}{\mathsf{rest.}}{g_t^*}
\]
Without loss of generality we may assume that $T$ is biholomorphic to a ball in $\CC^m$. This makes the vertical arrows in the diagram isomorphisms.
Employing the commutativity of the diagram twice, once for $t$ and once for $t=o$, we deduce that
\[
g_o^* \circ \mu_{i^{-1}(t)} = \mu_t \circ g_t^*.
\]
Invoking \cref{pm-id} we furthermore deduce that
\begin{align*}
(g_o^*\circ\phi^{-1})[t] &= g_o^*[i^{-1}(t)] \\
&= (g_o^* \circ \mu_{i^{-1}(t)})[\Hdg20{X_{i^{-1}(t)}}] \\
&= (\mu_t \circ g_t^*)[\Hdg20{X_{i^{-1}(t)}}] \\
&= \mu_t[\Hdg20{X_t}] = t.
\end{align*}
Hence \cref{linalg} implies that $g_o^* \circ \phi^{-1} = \lambda\id V$ for a complex number $\lambda\ne0$. Seeing that $g_o^* \circ \phi^{-1}$ restricts to an automorphism of $\Hsh2{X_o}\ZZ$, we infer that $\lambda$ is either $1$ or $-1$. Both alternatives lead to a contradiction.

Consider the real cone
\[
C \coloneqq \setb{v \in \Hdg11{X_o}}{\conj v = v,\; \bil vv>0} \subset \Hdg11{X_o}
\]
and notice that both $g_o^*$ and $\phi$ map $C$ homeomorphically onto itself. Notice moreover that $C$ has precisely two connected components; one of these components, say $C_+$, contains all Kähler classes of $X_o$ while the other component is just $-C_+$ \cite[\S8.5.1]{K3book}.
Since $g_o^*$ takes Kähler classes to Kähler classes, $g_o^*$ preserves the connected components of $C$. For all $w \in C$ we see that the line segment joining $w$ and $\phi(w)$ is contained in $C$.
Thus $\phi$ preserves the connected components of $C$, too, while $-\phi$ swaps the components. So if $g_o^* = -\phi$, we obtain a contradiction.
If on the other hand $g_o^* = \phi$, then $g_o^*(d) = -d$. As mentioned at the outset of \cref{construction} we can assume that $d$ is the class of a smooth rational curve on $X_o$. Then, however, for every Kähler class $c$ on $X_o$,
\[
0 < \bil cd = \bil{g_o^*(c)}{g_o^*(d)} = -\bil{g_o^*(c)}d < 0,
\]
which completes our proof of \cref{theorem}.

\section{Closing remarks}

In our discussions with Schwald we have realized that an alternative, yet effectively related, proof strategy for \cref{theorem} uses the existence of the flop of a $(-2)$-curve in a complex threefold. Using this idea we can construct pointwise but not everywhere locally isomorphic families of K3 surfaces over a $1$-dimensional complex manifold $S$, whereas the complex manifold $S$ in \cref{construction} is of dimension $20$. We refrain from going into the details.

As regards Meersseman's work it seems worthwhile to investigate whether and to what extend a weakened form of his result \cite[Theorem 3]{Mee11} remains true—for instance, assuming that \cref{h0} defines a constant map, does the family of compact complex manifolds $f$ have the local isomorphism property at the (very) general point of $S$?

\printbibliography
\end{document}